\newcommand{\eChar}{\begin{enumerate}[(i)]}
\newcommand{\eCharR}{\begin{enumerate}[(a)]}
\newcommand{\eBr}{\begin{enumerate}[(1)]}
\newcommand{\Abstract}
\theoremstyle{plain}
\newtheorem{lemma}{Lemma}[section]
\newtheorem{theorem}[lemma]{Theorem}
\newtheorem{corollary}[lemma]{Corollary}
\theoremstyle{definition}
\newtheorem{conjecture}[lemma]{Conjecture}
\newtheorem{remark}[lemma]{Remark}
\newtheorem{problem}[lemma]{Problem}
\newenvironment{breakablealgorithm}
  {
   \begin{center}
     \refstepcounter{algorithm}
     \hrule height.8pt depth0pt \kern2pt
     \renewcommand{\caption}[2][\relax]{
       {\raggedright\textbf{\ALG@name~\thealgorithm} ##2\par}%
       \ifx\relax##1\relax 
         \addcontentsline{loa}{algorithm}{\protect\numberline{\thealgorithm}##2}%
       \else 
         \addcontentsline{loa}{algorithm}{\protect\numberline{\thealgorithm}##1}%
       \fi
       \kern2pt\hrule\kern2pt
     }
  }{
     \kern2pt\hrule\relax
   \end{center}
  }
\def\Cr{{\rm cr}}
\def\l{\left(}
\def\r{\right)}
\title{On a conjecture of Pach-Spencer-T\'oth for graph crossing numbers}
\author{
Kaizhe Chen\thanks{School of the Gifted Young, University of Science and Technology of China, Hefei, Anhui 230026, China. Email: ckz22000259@mail.ustc.edu.cn.}
~~~~
Jie Ma\thanks{School of Mathematical Sciences, University of Science and Technology of China, Hefei, Anhui 230026, China, and Yau Mathematical Sciences Center, Tsinghua University, Beijing 100084, China.
Research supported by National Key Research and Development Program of China 2023YFA1010201 and National Natural Science Foundation of China grant 12125106. Email: jiema@ustc.edu.cn.}
}
\date{\today}
\begin{document}

\maketitle
\thispagestyle{plain}
\begin{abstract}
The crossing number of a graph $G$ denotes the minimum number of crossings in any planar drawing of $G$. 
In this short note, we confirm a long-standing conjecture posed by Pach, Spencer, and T\'oth over 25 years ago, establishing an optimal lower bound on the crossing number of graphs that satisfy some monotone properties. 
Furthermore, we address a related open problem introduced by Pach and T\'oth in 2000, which explores the interplay between the crossing number of a graph, its degree sequence, and its bisection width.
\end{abstract}

\section{Introduction}
Let $G$ be a (simple) graph. An {\it arc} is the image of a continuous injective map $[0,1]\rightarrow {\textbf{R}}^2$. 
A {\it planar drawing} of $G$ is a mapping $f$ that assigns to each vertex $v$ of $G$ a point $f(v)$ in the plane and to each edge $xy$ of $G$ an arc connecting $f(x)$ and $f(y)$, not passing through the image of any other vertex. 
We assume that no three edges have an interior point in common.
The {\it crossing number} $\Cr(G)$ of $G$ denotes the minimum number of crossings in any planar drawing of $G$.
Throughout this paper, for a graph $G$, we let $n(G)$ denote the number of vertices in $G$ and let $e(G)$ denote the number of edges in $G$. 

The study of crossing numbers lies at the nexus of discrete geometry and algorithmic design, with applications ranging from geometric embeddings to VLSI layout optimization. 
Investigating crossing numbers and their variants, such as the rectilinear crossing number, reveals their deep connections to combinatorial geometry, hardness of approximation, and efficient algorithms, making them central to both foundational and applied research in discrete and computational geometry (see \cite{Sch} for a comprehensive survey).
The following famous result, known as {\it the crossing lemma}, which is proved by Ajtai, Chv\'atal, Newborn and Szemer\'edi \cite{ACNS} and independently by Leighton \cite{L}, gives a general lower bound for the crossing number of graphs with given numbers of vertices and edges:
Every $n$-vertex graph $G$ with $e\geq 4n$ edges satisfies 
\begin{equation}\label{equ:cr}
\Cr(G)\ge \frac{1}{64} \frac{e^3}{n^2}.  
\end{equation}
Improvements on the constants can be found in \cite{PT,A}.
Another influential result proved by Garey and Johnson \cite{GJ} states that the crossing number problem is NP-complete.

Answering a question of Simonovits, 
Pach, Spencer and T\'oth \cite{PST} proved that the general lower bound \eqref{equ:cr} can be improved substantially for graphs satisfying certain monotone properties, as follows. 
Let $G$ be a graph with $n$ vertices and $e$ edges. Suppose that there are constants $A, \alpha > 0$ such that  
\begin{equation}\label{equ:e(H)}
\mbox{ any subgraph $H$ of $G$ satisfies } e(H)\le A\cdot (n(H))^{1+\alpha}.
\end{equation}
Then there exist constants $c,c'>0$ depending only on $A$ and $\alpha$ such that
\begin{equation}\label{equ:logn^2}
\mbox{ if } e \ge cn \log^2 n, \mbox{ then the crossing number of } G \mbox{ satisfies } \Cr(G)\ge c'\frac{e^{2+1/\alpha}}{n^{1+1/\alpha}}.
\end{equation}
This bound is tight up to a constant factor as shown in \cite{PST}.
Furthermore, Pach, Spencer and T\'oth \cite{PST} conjectured that the same statement holds even for all $n$-vertex graphs with $e\ge  cn$ edges for a suitable constant $c> 0$ (see also \cite{RS}). 
\begin{conjecture}[Pach-Spencer-T\'oth, \cite{PST}]\label{conj}
    Let $G$ be a graph with $n$ vertices and $e$ edges satisfying \eqref{equ:e(H)} for some constants $A,\alpha>0$. Then there exist constants $c,c'>0$ depending only on $A$ and $\alpha$ such that if $e \geq cn$, then $\Cr(G)\ge c'\frac{e^{2+1/\alpha}}{n^{1+1/\alpha}}.$
\end{conjecture}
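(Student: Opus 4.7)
My plan is to adapt the recursive-bisection strategy of Pach-Spencer-T\'oth~\cite{PST}, upgrading its two key ingredients so that the $\log^2 n$ slack in~\eqref{equ:logn^2} disappears.

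The starting point is the classical Pach-Shahrokhi-Szegedy bisection inequality: every $n$-vertex graph $G$ admits a balanced vertex partition whose cut size satisfies $b(G)\le C\bigl(\sqrt{\Cr(G)}+\sqrt{\sum_v d_v^2}\bigr)$. As a preprocessing step I would use~\eqref{equ:e(H)} to bound the high-degree tail: an easy consequence of the monotone property is that $G$ has at most $O(n^{1+\alpha}/k)$ vertices of degree $\ge k$, so by discarding the very top of the degree distribution one passes to a subgraph with essentially the same number of edges and with $\sum_v d_v^2$ now controlled by the global parameters $e$ and $n$ rather than by the raw maximum degree, simplifying the bisection inequality to $b(G)\le C'\bigl(\sqrt{\Cr(G)}+e/\sqrt n\bigr)$ (up to lower-order terms).

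Next I would recursively bisect $G$, producing at depth $i$ a family of $2^i$ pieces each on at most $n/2^i$ vertices. The crucial structural input is that~\eqref{equ:e(H)} is inherited by subgraphs, so the total number of edges remaining inside level-$i$ pieces is at most $2^i\cdot A(n/2^i)^{1+\alpha}=An^{1+\alpha}\,2^{-i\alpha}$, which decays geometrically in $i$. Thus as the recursion deepens, essentially all of the edges must be "cut" by some bisection; by the bisection inequality each such cut forces $\Cr(G)$ to be correspondingly large. After choosing the optimal depth $i^*$ at which the geometric "inside" term balances the accumulated "cut" contribution, and then applying the crossing lemma~\eqref{equ:cr} to the leaf pieces, one should obtain the desired inequality $\Cr(G)\ge c'\,e^{2+1/\alpha}/n^{1+1/\alpha}$ whenever $e\ge cn$.

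The main obstacle, and the reason why~\cite{PST} needs $e\ge cn\log^2 n$, is that a vanilla iteration of the Pach-Shahrokhi-Szegedy inequality over $\log n$ levels of bisection loses a $\log n$ factor at each level. To eliminate this I expect one needs a sharpened bisection-type inequality tailored to graphs satisfying~\eqref{equ:e(H)}, morally of the form $b(G)\le C(A,\alpha)\bigl(\sqrt{\Cr(G)}+\Phi_\alpha(e,n)\bigr)$, in which the error term $\Phi_\alpha$ is calibrated so that the recursive losses telescope cleanly rather than compound. Establishing such an inequality by a direct argument exploiting the density and substructure forced by~\eqref{equ:e(H)} together with the hypothesis $e\ge cn$ is, I believe, the technical crux of the proof and the reason this conjecture has resisted a solution for over two decades.
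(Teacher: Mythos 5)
Your plan correctly identifies the general skeleton—preprocess the degree sequence, recursively bisect via the Pach–Shahrokhi–Szegedy inequality, and use the inherited monotone bound~\eqref{equ:e(H)} to show that edges must be driven to zero as the recursion proceeds—but you stop short of the two ideas that actually make the argument close, and you explicitly flag the crucial step (a ``sharpened bisection-type inequality'') as an unsolved problem. That gap is real, but the paper's resolution is not a strengthening of the bisection inequality at all; the standard inequality~\eqref{equ:b(G)} is used verbatim.

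The first point of divergence is the degree-control preprocessing. You propose \emph{discarding} the high-degree vertices, which can delete a constant fraction of the edges (e.g.\ if $G$ is a union of stars of degree slightly above $2e/n$, removing the centers destroys the graph). The paper instead \emph{splits} each vertex of degree $d>\overline d:=2e/n$ into $\lceil d/\overline d\rceil$ copies on a tiny circle in the drawing, which preserves every edge, does not increase the crossing number, at most doubles the vertex count, and — crucially — caps the maximum degree by $\overline d$, so that $\sum_v d_v^2 \le \overline d\cdot 2e = 4e^2/n$ holds exactly and uniformly on every subgraph produced later. This single step replaces your vague ``control the high-degree tail'' with a clean pointwise bound that carries through the whole recursion.

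The second point is how the recursion is organized. You bisect every piece at every level, producing $2^i$ pieces of size $n/2^i$, and then hope to balance terms at an optimal depth $i^*$ and finish with the crossing lemma on the leaves. The paper instead tracks pieces by their size window: at stage $i$ only the $m_i$ components whose order lies in $[(2/3)^{i+1}N,(2/3)^iN]$ are bisected, and $m_i\le (3/2)^{i+1}$. Applying Cauchy–Schwarz to the $m_i$ cuts at level $i$, both the $\sqrt{\Cr}$-contribution and the $\sqrt{\sum d^2}$-contribution pick up a factor $\sqrt{(3/2)^{i+1}}$, so the total number of deleted edges is a geometric series dominated by its last term. The recursion is then stopped not at a ``balancing depth'' but at the first $k$ with $(2/3)^k< (e/(2AN))^{1/\alpha}/N$; at that point the monotone property~\eqref{equ:e(H)} (applied to the pre-images of the pieces in $G$) forces fewer than $e/2$ edges to survive inside the pieces, while the geometric-series bound—using $(3/2)^{k-1}\le (2A)^{1/\alpha}N^{1+1/\alpha}/e^{1/\alpha}$—shows that fewer than $e/2$ edges were cut, a contradiction. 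No appeal to the crossing lemma~\eqref{equ:cr} on leaves is needed, and no strengthened bisection inequality is required: the $\log^2 n$ loss in~\cite{PST} is eliminated purely by the vertex splitting (so the degree term does not compound) and by the $(3/2)^i$ accounting (so the level contributions telescope). Without these two specific devices your outline does not yield a proof.
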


The authors of \cite{PST} also verified this conjecture for several interesting monotone properties, including the family of $K_{s,t}$-free graphs as well as the family of $C_{2k}$-free graphs for $k\in\{2,3\}$.
F\"uredi and K\"undgen \cite{FK} obtained an improvement on \eqref{equ:logn^2} by showing that 
in the case $\alpha\in (0,1/2)$, the same bound holds under the weaker condition $e \ge cn \log n$. 

In this paper, we resolve Conjecture~\ref{conj} by proving the following theorem. 
Our argument refines the approach developed in \cite{PST,FK}.

\begin{theorem}\label{main}
    Let $G$ be a graph with $n$ vertices and $e$ edges. Suppose that there are constants $A, \alpha > 0$ such that any subgraph $H$ of $G$ satisfies $e(H)\le A\l n(H)\r^{1+\alpha}.$
    Then there exist constants $c,c'>0$ depending only on $A$ and $\alpha$ such that
$$\mbox{ if } e \ge cn, \mbox{ then the crossing number of } G \mbox{ satisfies } \Cr(G)\ge c'\frac{e^{2+1/\alpha}}{n^{1+1/\alpha}}.$$
\end{theorem}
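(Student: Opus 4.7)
The plan is to prove Theorem \ref{main} by strong induction on $n$, refining the iterated bisection approach of \cite{PST, FK}. The target bound coincides with the crossing lemma \eqref{equ:cr} when $\alpha=1$, so the substantive task is to exploit the monotone hypothesis $e(H)\le A\,n(H)^{1+\alpha}$ when $\alpha<1$. The central tool is a sharp bisection inequality of Pach--Shahrokhi--Szegedy type, $b(G)\le \kappa_0\bigl(\sqrt{\Cr(G)}+\sqrt{\sum_v d_v^2}\bigr)$, which avoids the $\log n$ factor present in weaker separator bounds.

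Given this tool, the argument runs as follows. I would produce a balanced partition $V(G)=V_1\cup V_2$ with $|V_i|\le \tfrac{2}{3}n$ and cut of size $b$; writing $n_i=|V_i|$ and $e_i=e(G[V_i])$, the monotone hypothesis passes to each $G[V_i]$, and $e_1+e_2\ge e-b$ together with $\Cr(G)\ge \Cr(G[V_1])+\Cr(G[V_2])$. Applying the induction hypothesis to the halves yields roughly $c'\bigl(e_1^{2+1/\alpha}/n_1^{1+1/\alpha}+e_2^{2+1/\alpha}/n_2^{1+1/\alpha}\bigr)$; convexity of $x\mapsto x^{2+1/\alpha}$ together with the edge budget $e_1+e_2\ge e-b$ collapses this to essentially $c' e^{2+1/\alpha}/n^{1+1/\alpha}$, up to a deficit proportional to~$b$.

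The main obstacle, which is exactly what distinguishes the $e\ge cn$ regime from the $e\ge cn\log^2 n$ regime of \cite{PST}, is that a naive induction accumulates a constant loss at each of the $\Theta(\log n)$ bisection levels. To overcome this, I would strengthen the induction hypothesis by carrying an auxiliary potential of the form $\Phi(G)=\Cr(G)+\lambda \sum_v d_v^2$, with $\lambda=\lambda(A,\alpha)$ chosen so that the bisection deficit can be re-absorbed into the increase of $\Phi$ on the two halves. The degree-square term acts as a reservoir that pays for the bisection cost, and it is itself controlled via the monotone hypothesis applied to neighborhoods of high-degree vertices (the closed neighborhood of a vertex of degree $\Delta$ is a subgraph of density forcing $\Delta\lesssim n^{\alpha}$, which keeps $\sum_v d_v^2$ small relative to the target).

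The most delicate step will be organizing the bookkeeping so that at every level the cost of the bisection is strictly dominated by the gain on the two halves. I anticipate splitting the inductive step into two regimes according to the ratio $\Cr(G)/\sum_v d_v^2$: when $\sum_v d_v^2$ dominates, the sharp bisection inequality alone controls $b$, and when $\Cr(G)$ dominates, $G$ already has enough crossings for the inequality to close with slack. A careful calibration of $\lambda$ and of the split ratio---driven by $\alpha$ and $A$, and possibly allowing unbalanced cuts when high-degree vertices are concentrated---is where I expect the bulk of the work, and this is what should collapse the $\log^2 n$ factor and leave only the hypothesis $e\ge cn$.
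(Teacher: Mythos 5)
Your proposal correctly identifies the two structural pillars—the Pach--Shahrokhi--Szegedy bisection inequality \eqref{equ:b(G)} and a repeated-bisection scheme that must avoid accumulating a factor per level—but it diverges from the paper at the one point that actually makes the argument close, and the substitute you propose there does not work.

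The claimed degree bound is false. You write that the monotone hypothesis applied to a closed neighborhood forces $\Delta\lesssim n^{\alpha}$, so that $\sum_v d_v^2$ is automatically small. That is not so: a star $K_{1,m}$ satisfies $e(H)\le A\,n(H)^{1+\alpha}$ for every subgraph $H$ and every $A\ge 1$, yet its maximum degree is $m$, and plugging $H=N[v]$ only gives $\Delta\le A(\Delta+1)^{1+\alpha}$, which is vacuous. So the monotone hypothesis gives no pointwise degree control whatsoever, and the quantity $\sum_v d_v^2$ that feeds \eqref{equ:b(G)} can be as large as $\Theta(n^2)$ for a graph with only $\Theta(n)$ edges. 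The paper's remedy is to \emph{preprocess} the drawing by splitting every vertex of degree exceeding $\overline d=2e/n$ into $\lceil d/\overline d\rceil$ low-degree copies placed on a tiny circle; this leaves the crossing number and edge count unchanged, at most doubles the vertex count, and forces every degree to be at most $\overline d$, so that $\sum_v d_v^2\le \overline d\cdot 2e=4e^2/n$. Without this splitting step the degree-square term is not controlled and the bisection bound is useless; any version of your argument needs it (or an equivalent device).

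Beyond that, your route is an induction on $n$ with an auxiliary potential $\Phi(G)=\Cr(G)+\lambda\sum_v d_v^2$, whereas the paper argues by contradiction: assume $\Cr(G)<c'e^{2+1/\alpha}/n^{1+1/\alpha}$, run a decomposition algorithm that bisects all components of a (split) graph until every component has at most $(e/(2AN))^{1/\alpha}$ vertices, show via the monotone hypothesis that the remaining graph then has fewer than $e/2$ edges, and bound the total number of deleted edges by a geometric series $\sum_{i<k}(3/2)^{(i+1)/2}$ whose last index $k$ is fixed by the stopping rule. The $\log^2 n$ in \cite{PST} disappears not through a potential cancellation but because this series is dominated by its last term, which the stopping rule ties directly to $e$, $n$, $A$, $\alpha$. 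Your potential idea is plausible in spirit but is not carried out: you do not state the strengthened inductive hypothesis, and the accounting is exactly where the difficulty lives. You would also have to address what happens when a bisected half falls below the $e_i\ge cn_i$ threshold and the induction hypothesis no longer applies, a case the paper sidesteps by never appealing to the conclusion on subgraphs. As written, the proposal has a genuine gap at the degree-control step and leaves the closing of the induction unproved.
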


The following result is a direct consequence of Theorem \ref{main}, where the cases of $k\in \{2,3\}$ are first proved in \cite[Theorems 3.1 and 3.2]{PST}.

\begin{corollary}\label{C2k}
    Let $k\ge 2$ be an integer and $G$ be a graph of $n$ vertices and $e$ edges, which contains no cycle of length $2k$. Then there exist two constants $c, c'>0$ depending only on $k$ such that if $e \ge cn$, then the crossing number of $G$ satisfies
    $$\Cr(G)\ge c'\frac{e^{2+k}}{n^{1+k}}.$$
\end{corollary}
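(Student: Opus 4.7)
The plan is to deduce Corollary~\ref{C2k} as an immediate application of Theorem~\ref{main}, so the only work is to verify that the class of $C_{2k}$-free graphs satisfies the hereditary edge bound \eqref{equ:e(H)} with the right parameters. The key ingredient is the classical Bondy--Simonovits theorem: there is a constant $C_k>0$, depending only on $k$, such that every $N$-vertex graph containing no copy of $C_{2k}$ has at most $C_k\cdot N^{1+1/k}$ edges.

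Since the property of being $C_{2k}$-free is hereditary, every subgraph $H$ of $G$ is also $C_{2k}$-free, and hence by Bondy--Simonovits
\[
e(H)\le C_k\cdot\bigl(n(H)\bigr)^{1+1/k}.
\]
Thus condition \eqref{equ:e(H)} is satisfied with constants $A:=C_k$ and $\alpha:=1/k$, which depend only on $k$. Plugging $\alpha=1/k$ into Theorem~\ref{main} yields $2+1/\alpha=2+k$ and $1+1/\alpha=1+k$, so there exist constants $c,c'>0$ depending only on $k$ such that $e\ge cn$ implies
\[
\Cr(G)\ge c'\cdot\frac{e^{2+k}}{n^{1+k}},
\]
as required.

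There is no real obstacle here beyond quoting the correct Tur\'an-type result; the heavy lifting has been done in the proof of Theorem~\ref{main}. The one point to emphasize in the write-up is that the monotone condition \eqref{equ:e(H)} is required to hold for \emph{every} subgraph, and this is exactly why the hereditary nature of the $C_{2k}$-free property is essential. Everything else is a bookkeeping substitution of $\alpha=1/k$.
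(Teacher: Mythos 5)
Your proof is correct and follows exactly the paper's route: invoke the Bondy--Simonovits theorem to verify the hereditary edge bound with $A = C_k$ and $\alpha = 1/k$, then substitute into Theorem~\ref{main}. The paper states this derivation even more tersely, but the content is identical.
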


The study of crossing numbers is deeply intertwined with other graph parameters, particularly the {\it bisection width}, which has proven instrumental in bridging combinatorial and topological properties of graphs.
Let $G$ be a graph with $n$ vertices. For any bipartition $V(G)=V_1\cup V_2$, let $E(V_1, V_2)$ denote the set of edges in $G$ with one endpoint in $V_1$ and the other endpoint in $V_2$. 
The bisection width, denoted by $b(G)$, of $G$ is defined as
$$b(G)=\min |E(V_1, V_2)|,$$
where the minimum is taken over all bipartitions $V(G) = V_1 \cup V_2$ with $|V_1|,|V_2|\ge n/3$.
Leighton \cite{L83} was the first to identify a strong correlation between the bisection width and the crossing number of a graph, rooted from the Lipton-Tarjan separator theorem \cite{LT} for planar graphs.
This was enhanced in the following remarkable theorem, established by Pach, Shahrokhi and Szegedy \cite{PSS}
and independently by S\'ykora and Vrt'o \cite{SV}:
Let $G$ be a graph of $n$ vertices, whose degrees are $d_1, d_2,..., d_n$. Then it holds that
\begin{equation}\label{equ:b(G)}
b(G)\le 6.32\sqrt{\Cr (G)}+ 1.58\sqrt{\sum_{i=1}^n d_i^2}.
\end{equation}
We would like to emphasize that
the proof of \eqref{equ:logn^2} in \cite{PST} heavily relies on the use of the above theorem. 
The example of a $star$ shows that the inequality \eqref{equ:b(G)} does not remain true if we remove the last term on its right hand side. 
Pach and T\'oth \cite[Problem 5]{PT00} asked whether the dependence of the inequality \eqref{equ:b(G)} on the degrees of the vertices can be improved.

\begin{problem}[Pach-T\'oth, \cite{PT00}, Problem 5]
    Determine the possible values of $t$ such that
    \begin{align}\label{t}
        b(G)=O\left( \sqrt{\Cr (G)}+ \left( \sum_{i=1}^n d_i^t \right)^{1/t} \right)
    \end{align}
    holds for every graph $G$ of $n$ vertices with degrees $d_1, d_2,..., d_n$.
\end{problem}

In the next result, we solve this problem by determining all possible values of $t$ for which \eqref{t} holds. 

\begin{theorem}\label{problem}
    Let $t$ be a positive constant. For $0<t\le 2$, every graph $G$ of $n$ vertices with degrees $d_1, d_2,..., d_n$ satisfies
    \begin{align}\label{t2}
        b(G)= O \l \sqrt{\Cr (G)}+ \left( \sum_{i=1}^n d_i^t \right)^{1/t} \r.
    \end{align}
    For $t>2$, there exist infinitely many integers $n$ with a graph $G$ of $n$ vertices having degrees $d_1, d_2,..., d_{n}$ such that 
    \begin{align}\label{t3}
        b(G)\ge \frac{1}{12} n^{1/2-1/t} \cdot \left( \sqrt{\Cr (G)}+ \left( \sum_{i=1}^{n} d_i^t \right)^{1/t} \right).
    \end{align}
    Therefore, \eqref{t} holds if and only if  $0<t\le 2$.
\end{theorem}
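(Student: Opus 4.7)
For the range $0 < t \le 2$, the plan is to deduce \eqref{t2} directly from the Pach-Shahrokhi-Szegedy / S\'ykora-Vrt'o inequality \eqref{equ:b(G)}, combined with the elementary $\ell^p$-norm monotonicity: for non-negative reals and $0 < p \le q$ one has $\left(\sum d_i^q\right)^{1/q} \le \left(\sum d_i^p\right)^{1/p}$. Applied with $p = t$ and $q = 2$ (valid since $t \le 2$), this gives $\left(\sum d_i^2\right)^{1/2} \le \left(\sum d_i^t\right)^{1/t}$, and substituting into \eqref{equ:b(G)} yields \eqref{t2} with the same absolute constants. I anticipate no difficulty here, and observe that the monotonicity reverses at $t = 2$, which is exactly what motivates the second part.

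For the range $t > 2$, I plan to exhibit an explicit infinite family of counterexamples, namely the $k \times k$ square grid $G_k = P_k \square P_k$ on $n = k^2$ vertices. The three quantities appearing in \eqref{t3} are easy to control for $G_k$: it is planar, so $\Cr(G_k) = 0$; every vertex has degree at most $4$, so $\left(\sum_i d_i^t\right)^{1/t} \le (n \cdot 4^t)^{1/t} = 4\, n^{1/t}$; and the bisection width satisfies $b(G_k) \ge k = \sqrt{n}$. Substituting these into the right-hand side of \eqref{t3} gives at most $\tfrac{1}{12} n^{1/2-1/t} \cdot 4 n^{1/t} = \tfrac{1}{3}\sqrt{n} \le b(G_k)$, so \eqref{t3} holds (in fact with ratio $\tfrac{1}{4} n^{1/2 - 1/t}$, comfortably above the required $\tfrac{1}{12} n^{1/2-1/t}$). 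Combining the two ranges then yields the claimed ``if and only if''.

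The one non-routine step is the lower bound $b(G_k) \ge k$ on the bisection width of the grid. The subtlety is that the definition of $b(G)$ allows unbalanced bipartitions with $|V_1|:|V_2|$ as extreme as $1:2$, so clever asymmetric cuts must also be ruled out. This is, however, a standard consequence of the edge-isoperimetric inequality on the two-dimensional grid: for any $s \in [n/3,\, 2n/3]$, every $s$-subset of $P_k \square P_k$ has edge boundary at least $k$, with the minimum attained by a strip of $\lceil s/k \rceil$ consecutive rows or columns. I would either invoke the known isoperimetric bound for grids or give a short self-contained argument by projecting the cut onto rows and columns. I regard this as the main (though minor) technical hurdle in the proof.
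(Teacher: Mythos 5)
Your proposal follows essentially the same route as the paper. For $0<t\le 2$ you combine $\ell^p$-monotonicity of norms with the Pach--Shahrokhi--Szegedy / S\'ykora--Vrt'o inequality \eqref{equ:b(G)}; the paper derives the same inequality $(\sum d_i^2)^{1/2}\le(\sum d_i^t)^{1/t}$ by applying Jensen to the convex map $x\mapsto x^{2/t}$, which is just one concrete way of proving the monotonicity you invoke. For $t>2$ you take the $k\times k$ grid on $n=k^2$ vertices as the counterexample, which is exactly the paper's construction, and the accounting ($\Cr=0$, degrees $\le 4$) matches.

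The one place you diverge, and which you correctly flag as the lone technical point, is the bisection lower bound for the grid. You propose to invoke (or re-derive) the edge-isoperimetric inequality on $[k]^2$ to obtain $b(G)\ge k$. That would work, but it is more machinery than the stated constant requires: \eqref{t3} with the factor $\tfrac{1}{12}$ only needs $b(G)\ge k/3$, and the paper proves this weaker bound by a short counting argument with no isoperimetry at all. Fix a bipartition $V(G)=V_1\cup V_2$ with $|V_1|,|V_2|\ge k^2/3$. If some row lies entirely inside $V_1$, then no column lies entirely inside $V_2$; since at most $2k/3$ columns can lie entirely inside $V_1$ (as $|V_1|\le 2k^2/3$), at least $k/3$ columns meet both parts, and each such column contributes a cut edge. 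Otherwise every row meets $V_2$; since $|V_1|\ge k^2/3$, at least $k/3$ rows also meet $V_1$, and each such row contributes a cut edge. Either way $|E(V_1,V_2)|\ge k/3$, and then $\tfrac{1}{12}n^{1/2-1/t}\cdot 4n^{1/t}=\tfrac{1}{3}\sqrt{n}\le b(G)$ gives \eqref{t3}. This removes the one hurdle you identified and is preferable to appealing to the Bollob\'as--Leader-type result; everything else in your argument goes through verbatim.
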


Finally, we would like to present the following result as a dual theorem of Theorem \ref{main}, 
demonstrating the possibility to bound the number of edges in a graph through estimates on the crossing numbers.

\begin{theorem}\label{dual}
    Let $G$ be a graph. Suppose that there are constants $N, \alpha > 0$ such that any subgraph $H$ of $G$ with at least $N$ edges satisfies
    $$\Cr(H)\le \frac{\l e(H) \r^2}{2^{16+3/\alpha}}.$$
    Then there exists a constant $A$ which depends only on $N$ and $\alpha$ such that 
    $$e(G)\le A\cdot \l n(G)\r^{1+\alpha}.$$
\end{theorem}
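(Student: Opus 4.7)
The plan is to prove the bound by strong induction on $n(G)$, using the bisection inequality \eqref{equ:b(G)} to convert the hypothesis on crossing numbers into an edge bound. Write $C := 2^{16+3/\alpha}$ and introduce $\eta := 6.32/\sqrt{C}$ together with $\beta := (1/3)^{1+\alpha} + (2/3)^{1+\alpha}$. The strict convexity of $x\mapsto x^{1+\alpha}$ gives $\beta<1$, and the exponent in $C$ is chosen large enough that $\eta+\beta<1$ uniformly in $\alpha>0$. I would then pick constants $A=A(N,\alpha)$ and $n_0=n_0(N,\alpha)$ satisfying (i) $A\ge N$, (ii) $An^{1+\alpha}\ge\binom{n}{2}$ for every $n\le n_0$, and (iii) $An_0^{\alpha}\ge\frac{2\cdot 1.58^2}{(1-\eta-\beta)^2}$; these constraints are mutually consistent (take $n_0$ sufficiently large in terms of $(1-\eta-\beta)^{-1}$, then $A$ accordingly), yielding constants that depend only on $N$ and $\alpha$.

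For the inductive step, suppose $G$ satisfies the hypothesis, $n:=n(G)>n_0$, and, toward a contradiction, $e:=e(G)>An^{1+\alpha}$. Then $e>A\ge N$, so $\Cr(G)\le e^2/C$. Take a bisection $V(G)=V_1\cup V_2$ with $|V_i|\in[n/3,2n/3]$ realizing $b(G)$. Each $G[V_i]$ is a subgraph of $G$ and hence still satisfies the hypothesis of the theorem, so induction gives $e(G[V_i])\le A|V_i|^{1+\alpha}$; the convexity inequality $|V_1|^{1+\alpha}+|V_2|^{1+\alpha}\le\beta\, n^{1+\alpha}$ (the maximum over $|V_i|\in[n/3,2n/3]$ with $|V_1|+|V_2|=n$) then yields $e(G[V_1])+e(G[V_2])\le A\beta n^{1+\alpha}$. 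Feeding $\Cr(G)\le e^2/C$ and $\sum_i d_i^2\le 2e(n-1)$ into \eqref{equ:b(G)} gives $b(G)\le \eta e + 1.58\sqrt{2en}$, and summing produces
\[
(1-\eta)\,e \;\le\; A\beta\, n^{1+\alpha} + 1.58\sqrt{2en}.
\]
Replacing $A\beta n^{1+\alpha}$ by the strictly larger $\beta e$ (by the contradictory assumption) and rearranging yields $(1-\eta-\beta)\,e<1.58\sqrt{2en}$, whence $e<\frac{2\cdot 1.58^2}{(1-\eta-\beta)^2}\,n$. Combined with $e>An^{1+\alpha}$, this forces $An^{\alpha}<\frac{2\cdot 1.58^2}{(1-\eta-\beta)^2}$, i.e.\ $n<n_0$, contradicting $n>n_0$. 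The base cases $n\le n_0$ and $e<N$ are immediate from the choices of $A$ and $n_0$.

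The main technical point is the quantitative verification of $\eta+\beta<1$ uniformly in $\alpha>0$. As $\alpha\to 0^+$ one has $1-\beta(\alpha)=\Theta(\alpha)$, whereas $\eta=6.32/\sqrt{C}$ is of order $2^{-3/(2\alpha)}$ and so decays much faster; the margin is comfortable, but it must be checked, and this is precisely why the hypothesis of the theorem uses the exponent $16+3/\alpha$ in $C$. Once this elementary estimate is in hand the recursion closes and delivers a constant $A$ depending only on $N$ and $\alpha$, as required.
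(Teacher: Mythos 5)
Your proof is correct, and it takes a genuinely different route from the paper. The paper proves Theorem~\ref{dual} as a short corollary of Theorem~\ref{main}: it takes a minimum counterexample $G$, observes that minimality forces every subgraph $H$ to satisfy $e(H)\le 2A\,n(H)^{1+\alpha}$, feeds this into Theorem~\ref{main} to obtain $\Cr(G)\ge c'e^{2+1/\alpha}/n^{1+1/\alpha}$, and derives a numerical contradiction against the hypothesis $\Cr(G)\le e^2/2^{16+3/\alpha}$. Your argument bypasses Theorem~\ref{main} entirely: you run a self-contained induction on $n(G)$ with a \emph{single} application of the bisection bound~\eqref{equ:b(G)} per inductive step, rather than invoking the full recursive decomposition algorithm. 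This buys elementarity and transparency --- the argument is genuinely shorter end to end and makes clear exactly where the exponent $16+3/\alpha$ is spent, namely in forcing $\eta=6.32\cdot 2^{-8-3/(2\alpha)}$ to be dominated by $1-\beta=1-(1/3)^{1+\alpha}-(2/3)^{1+\alpha}$, which you correctly observe behaves like $\Theta(\alpha)$ near $\alpha=0$ while $\eta$ decays superpolynomially. The price is that you lose the modularity the paper gains by reusing Theorem~\ref{main}, and the constant-checking ($\eta+\beta<1$ uniformly in $\alpha$, and the consistency of the constraints on $A$ and $n_0$) must be carried out directly; you flag both correctly. One small point worth tightening when writing this up: for the bisection realizing $b(G)$ one has $|V_1|+|V_2|=n$ with $n/3\le|V_i|\le 2n/3$, so the convexity bound $|V_1|^{1+\alpha}+|V_2|^{1+\alpha}\le\beta n^{1+\alpha}$ holds as stated; and you should state explicitly that $\sum_i d_i^2\le\Delta(G)\cdot 2e\le 2e(n-1)<2en$, which you are implicitly using. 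Otherwise the proposal is complete and correct.
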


We hope that Theorem \ref{dual} would facilitate the solution of other unsolved problems in combinatorial and computational geometry, such as the unit distance problem and the $k$-sets problem (see e.g. \cite{M}). 
For instance, if one can prove that the crossing number of any unit distance graph $G$ of $n$ vertices and $e$ edges satisfies $\Cr (G)= O(e^2/ \log\log e)$, then Theorem \ref{dual} implies that $e=n^{1+o(1)}$. 

The rest of the paper is organized as follows. 
In Section \ref{Crossing number and bisection width}, we prove Theorem \ref{problem}. 
In Section \ref{proof}, we prove Theorem \ref{main}, from which we derive Corollary \ref{C2k} and Theorem \ref{dual}.

\section{Crossing number and bisection width}\label{Crossing number and bisection width}

\begin{proof}[Proof of Theorem \ref{problem}]
    First, we consider the case that $0<t\le 2$. For any $x\ge 0$, let $f(x)=x^{2/t}$. Then the second derivative of $f(x)$ is
    $$f''(x)=\frac{2}{t}\l \frac{2}{t}-1 \r x^{2/t-2}\ge 0.$$
    For $1\le i\le n$, let $x_i=d_i^t$. It follows by the Jensen's inequality that
    $$\sum_{i=1}^n f(x_i)\le (n-1)f(0)+ f\l \sum_{i=1}^n x_i \r= f\l \sum_{i=1}^n x_i \r.$$
    That is
    $$\left( \sum_{i=1}^n d_i^2 \right)^{1/2} \le \left( \sum_{i=1}^n d_i^t \right)^{1/t}.$$
    Then, the inequality \eqref{t2} follows from \eqref{equ:b(G)}.

    Now suppose that $t>2$. Let $G$ be a graph defined as follows. 
    The vertex set of $G$ is the following subset of integer points in ${\textbf{R}}^2$
    $$V(G)= \left\{ (i,j) | i,j\in [n] \right\},\ {\rm where}\ [n]=\{1,2,...,n\}.$$ 
    Two vertices in $V(G)$ are adjacent if and only if the Euclidean distance between them is one (see Figure \ref{55}). 
    It is clear that $G$ is a planar graph with $n^2$ vertices of maximum degree four. 
    We claim that $b(G)\ge n/3$. 
    Indeed, let $V(G)= V_1\cup V_2$ be any partition of $V(G)$ such that $|V_1|,|V_2|\ge n^2/3$. 
    It suffices to prove that $|E(V_1, V_2)|\ge n/3$.
    \begin{figure}[htbp]
    \centering
    \includegraphics[scale=0.3]{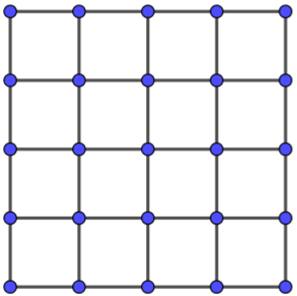}
    \caption{A drawing of $G$ with $n=5$}
    \label{55}
    \end{figure}
    
    Suppose that there is an $i_0\in [n]$ such that $(i_0,j)\in V_1$ for every $j\in [n]$. Since $|V_1|\le 2n^2/3$, there are at most $2n/3$ number of $j$ such that $(i,j)\in V_1$ for every $i\in [n]$. Thus, there are at least $n/3$ columns containing at least one edge in $E(V_1, V_2)$, and hence $|E(V_1, V_2)|\ge n/3$. 

    Now, we assume that there does not exist an $i_0\in [n]$ such that $(i_0,j)\in V_1$ for every $j\in [n]$. 
    That is, each row contains at least one vertex in $V_2$.
    Since $|V_1|\ge n^2/3$, there are at least $n/3$ rows containing at least one vertex in $V_1$. Each such row contains at least one edge in $E(V_1, V_2)$, and hence $|E(V_1, V_2)|\ge n/3$.

    Thus, $b(G)\ge n/3$. Since $\Cr (G)=0$, we have
    $$\sqrt{\Cr (G)}+ \left( \sum_{i=1}^{n^2} d_i^t \right)^{1/t} \le \left( \sum_{i=1}^{n^2} 4^t \right)^{1/t}=4n^{2/t}\le 12n^{2/t-1} b(G),$$
    completing the proof.
\end{proof}

\begin{remark}
    In the proof of Theorem \ref{problem}, for any integer $n$, we give a counterexample $G$ of $n^2$ vertices which is planar, i.e., $\Cr(G)=0$. 
    The graph $G$ can be easily turned into counterexamples $G'$ with unbounded $\Cr(G')$, by the following operations:
    choosing any $o(n^2)$ edges of $G$ and 
    replacing each of these edges by a copy of $K_{s,s}$ (for any fixed $s\geq 3$).\footnote{To be more precise, let $S$ be the set of vertices in these $o(n^2)$ edges. 
    Then $G'$ is obtained from $G$ by replacing each vertex $v\in S$ with an independent set $I_v$ of size $s$, adding all edges between vertices in $I_v$ and vertices $u\in V(G)\setminus S$ if $uv\in E(G)$, and adding all edges between $I_v$ and $I_u$ if $uv\in E(G)$.} 
    In this way, we can obtain a non-planar counterexample with $o(n^2)$ number of crossings.
\end{remark}

\section{Proofs of main results}\label{proof}
In this section, we prove Theorem \ref{main}, Corollary \ref{C2k} and Theorem \ref{dual}.

\begin{proof}[Proof of Theorem \ref{main}]
    Let $G$ be a graph with $n$ vertices and $e$ edges. Suppose that there are constants $A, \alpha > 0$ such that, for any subgraph $H$ of $G$, we have
    \begin{align}\label{assumption}
        e(H)\le A(n(H))^{1+\alpha}.
    \end{align}
    To prove Theorem \ref{main}, we assume for a contradiction that $e \ge cn$,  
    $$\Cr(G)< c'\frac{e^{2+1/\alpha}}{n^{1+1/\alpha}},$$
    and $G$ is drawn in the plane with exactly $\Cr(G)$ crossings, where $c$ and $c'$ are suitable constants which will be determined later (see \eqref{equ:cc'}).

    First, we split every vertex of $G$ whose degree exceeds $\overline{d} := 2e/n$ into vertices of degree at most $\overline{d}$, as follows. Let $v$ be a vertex of $G$ such that the degree $d$ of $v$ in $G$ satisfies $d > \overline{d}$. Let $vw_1, vw_2,..., vw_d$ be the edges incident to $v$, listed in clockwise order. Replace $v$ by $\lceil d/\overline{d} \rceil$ new vertices, $A_v=\{v_1, v_2,...,v_{\lceil d/\overline{d} \rceil}\}$, placed in clockwise order on a very small circle $C$ around $v$. 
    Connect $w_j$ to $v_i$ if and only if $\overline{d}(i-1) < j \le \overline{d}i$ for all $1 \le j \le d$ and $1 \le i \le \lceil d/\overline{d} \rceil$. 
    Let the circle $C$ be small enough such that we do not introduce any new crossings. Repeat this procedure until the degree of each vertex is at most $\overline{d}$, and denote the resulting graph by $G'$.

    By the construction of $G'$, we have $e(G')=e(G)=e$, and 
    \begin{align}\label{crG}
        \Cr(G')\le \Cr(G)< c'\frac{e^{2+1/\alpha}}{n^{1+1/\alpha}}.
    \end{align}
    Observe that 
    $$n(G')-n(G)=\sum_{v:d(v)> \overline{d}} (|A_v|-1)< \sum_{v:d(v)> \overline{d}} \frac{d(v)}{\overline{d}}\le \frac{2e}{\overline{d}}=n.$$
    We deduce $N:=n(G')\in [n,2n)$.

    Now, we break $G'$ into smaller components according to the following algorithm:

    \begin{breakablealgorithm}
    \caption{(Decomposition Algorithm)}
    \label{Decomposition Algorithm}
    \begin{algorithmic}[1]
    \item[1:] Let $G^0 = G', G^0_1 = G', M_0 =1,$ and $m_0 = 1$.
    
    \item[2:] For $i\ge 0$, suppose that $G^i$ consists of $M_i$ components $G^i_1$, $G^i_2$, ..., $G^i_{M_i}$, each of at most $(2/3)^i N$ vertices. Let $m_i\in [0, M_i]$ be the constant such that
    \begin{align}\label{mi}
        \l\frac{2}{3}\r^{i+1} N\le n(G^i_j) \le \l\frac{2}{3}\r^{i} N &,\ {\rm for}\ 1\le j \le m_i;\\ \label{Mi} n(G^i_j) < \l\frac{2}{3}\r^{i+1} N &,\ {\rm for}\ m_i< j \le M_i.
    \end{align}
    
    \item[3:] Recall the constants $A, \alpha$ from \eqref{assumption}. If
    $$\l\frac{2}{3}\r^{i}\ge \frac{1}{(2A)^{1/\alpha}}\frac{e^{1/\alpha}}{N^{1+1/\alpha}},$$
    then for every $1\le j \le m_i$, delete $b(G^i_j)$ edges from $G^i_j$ to obtain two induced subgraphs, each of at most $(2/3)n(G^i_j)$ vertices. Let $G^{i+1}$ denote the resulting graph on $N$ vertices. 
    Then each component of $G^{i+1}$ has at most $(2/3)^{i+1} N$ vertices. 
    Return to Step 2. 
    \item[4:] Else STOP.
    \end{algorithmic}
    \end{breakablealgorithm}

    Suppose that the Decomposition Algorithm terminates when $i=k$. If $k=0$, then 
    $$1< \frac{1}{(2A)^{1/\alpha}}\frac{e^{1/\alpha}}{N^{1+1/\alpha}}.$$
    It follows that
    $$e> 2A N^{1+\alpha}\ge 2A n^{1+\alpha},$$
    which is contradictory to the assumption \eqref{assumption}. Thus $k>0$, and 
    \begin{align}\label{k}
        \l\frac{2}{3}\r^{k}<\frac{1}{(2A)^{1/\alpha}}\frac{e^{1/\alpha}}{N^{1+1/\alpha}}\le \l\frac{2}{3}\r^{k-1}.
    \end{align}

    We will first show that $G^k$ contains less than $e/2$ edges. The number of vertices of each component of $G^k$ satisfies 
    \begin{align}\label{nG}
        n(G^k_j) \le \l\frac{2}{3}\r^{k} N< \frac{1}{(2A)^{1/\alpha}}\frac{e^{1/\alpha}}{N^{1+1/\alpha}} N=\l \frac{e}{2AN} \r^{1/\alpha}.
    \end{align}
    Let $V_j$ be the subset of $V(G)$ defined as follows:
    $$V_j=\{ v\in V(G) | v\in V(G^k_j)\ {\rm or}\ A_v\cap V(G^k_j)\ne \emptyset \},$$
    i.e., $V_j$ is the vertex subset consisting of all `pre-images' of $V(G_j^k)$. 
    The construction of $G'$ implies that $|V_j|\le n(G^k_j)$. 
    Let $G[V_j]$ be the subgraph of $G$ induced by $V_j$. 
    By the assumption \eqref{assumption} and the inequality \eqref{nG}, we have 
    \begin{align}\label{eGV}
        e(G[V_j])\le A|V_j|^{1+\alpha}\le A \l n(G^k_j) \r^{1+\alpha}< An(G^k_j)\cdot \frac{e}{2AN}=\frac{e}{2N}n(G^k_j).
    \end{align}
    By the construction of $G'$, each edge in $G^k_j$ has a corresponding edge in $G[V_j]$. Thus, we have 
    $$e(G^k_j) \le e\l G[V_j]\r.$$
    It follows, as desired, that
    $$e(G^k)=\sum_{j=1}^{M_k}e(G^k_j)< \frac{e}{2N}\sum_{j=1}^{M_k}n(G^k_j)=\frac{e}{2}.$$

    To obtain a contradiction, it now suffices to show that we deleted at most $e/2$ edges of $G'$ to obtain $G^k$. For any $0\le i < k$, the inequality \eqref{mi} implies that 
    \begin{align}
        m_i\le \l\frac{3}{2}\r^{i+1}.
    \end{align}
    It follows by the Cauchy-Schwarz inequality and \eqref{crG} that,  
    \begin{align}\label{cr}
        \sum_{j=1}^{m_i}\sqrt{\Cr(G^i_j)}\le \sqrt{m_i\sum_{j=1}^{m_i} \Cr(G^i_j)}\le \sqrt{\l\frac{3}{2}\r^{i+1}\Cr(G')}< \sqrt{\l\frac{3}{2}\r^{i+1}} \sqrt{c'\frac{e^{2+1/\alpha}}{n^{1+1/\alpha}}}.
    \end{align}

    For any vertex $v$ in $G^i$, 
    denote by $d(v, G^i)$ the degree of $v$ in $G^i$. Since $G^i$ is a subgraph of $G'$, we have
    $$\max_{v\in V(G^i)}d(v, G^i)\le \max_{v\in V(G')}d(v, G')\le \overline{d},$$
    and $$\sum_{v\in V(G^i)}d(v, G^i)\le \sum_{v\in V(G')}d(v, G')=2e(G')=2e.$$
    Thus, using the Cauchy-Schwarz inequality again, we deduce that for every $0\leq i<k$
    \begin{align}\label{D2}\notag 
        \sum_{j=1}^{m_i}\sqrt{\sum_{v\in V(G^i_j)}d^2(v, G^i_j)} &\le \sqrt{m_i\sum_{v\in V(G^i)}d^2(v, G^i)} \\ \notag 
        &\le \sqrt{\l\frac{3}{2}\r^{i+1}}\sqrt{\max_{v\in V(G^i)}d(v, G^i) \sum_{v\in V(G^i)}d(v, G^i)} \\ 
        &\le \sqrt{\l\frac{3}{2}\r^{i+1}} \sqrt{\overline{d}\cdot 2e}= \sqrt{\l\frac{3}{2}\r^{i+1}} \cdot \frac{2e}{\sqrt{n}}.
    \end{align}
    Using the theorem \eqref{equ:b(G)} and inequalities \eqref{cr} and \eqref{D2}, the total number $\sigma$ of edges deleted during the Decomposition Algorithm is
    \begin{align}\notag
        \sigma=\sum_{i=0}^{k-1}\sum_{j=1}^{m_i}b(G^i_j) &\le 6.32\sum_{i=0}^{k-1}\sum_{j=1}^{m_i} \sqrt{\Cr (G^i_j)} +1.58\sum_{i=0}^{k-1}\sum_{j=1}^{m_i} \sqrt{\sum_{v\in V(G^i_j)}d^2(v, G^i_j)} \\ \notag 
        &< 6.32\sqrt{c'\frac{e^{2+1/\alpha}}{n^{1+1/\alpha}}} \sum_{i=0}^{k-1}\sqrt{\l\frac{3}{2}\r^{i+1}}+ \frac{3.16e}{\sqrt{n}} \sum_{i=0}^{k-1}\sqrt{\l\frac{3}{2}\r^{i+1}} \\ \notag 
        &=\sqrt{\frac{3}{2}}\frac{\sqrt{(3/2)^k}-1}{\sqrt{3/2}-1}\l 6.32\sqrt{c'\frac{e^{2+1/\alpha}}{n^{1+1/\alpha}}}+ \frac{3.16e}{\sqrt{n}} \r 
    \end{align}
    By the inequality \eqref{k} that $(3/2)^{k-1}\leq \frac{(2A)^{1/\alpha}N^{1+1/\alpha}}{e^{1/\alpha}}$ and the fact that $N<2n$, we have
    \begin{align}\notag
        \sigma &< 6.7\sqrt{\frac{(2A)^{1/\alpha}N^{1+1/\alpha}}{e^{1/\alpha}}} \l 6.32\sqrt{c'\frac{e^{2+1/\alpha}}{n^{1+1/\alpha}}}+ \frac{3.16e}{\sqrt{n}} \r \\ \notag 
        &< 45\sqrt{2^{1+2/\alpha}A^{1/\alpha}c'e^2}+ 22\sqrt{2^{1+2/\alpha}A^{1/\alpha}n^{1/\alpha}e^{2-1/\alpha}}.
    \end{align}
    Each term on the right hand side of the above inequality is at most $e/4$, provided that 
    \begin{align}\label{equ:cc'}
       e > cn, ~~ c'= \frac{1}{(180)^22^{1+2/\alpha}A^{1/\alpha}}
        \mbox{ ~~and~~ } c= (88)^{2\alpha}2^{\alpha+2}A.
    \end{align}
    Hence $\sigma<e/2$, which is a contradiction, completing the proof.
\end{proof}

To prove Corollary \ref{C2k}, we need the following classic theorem of Bondy-Simonovits \cite{BS}.

\begin{theorem}[Bondy-Simonovits, \cite{BS}]\label{BS}
For a fixed integer $k\geq 2$, every $n$-vertex graph $G$ that does not contain any cycle of  length $2k$ has at most $100k\cdot n^{1+1/k}$ edges.
\end{theorem}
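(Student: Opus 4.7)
The plan is to follow the classical Bondy--Simonovits strategy in three stages: a reduction to high minimum degree, a counting of many length-$k$ paths from a root vertex, and a parity/theta-graph extraction of a cycle of length exactly $2k$. First, if $e(G) > 100k \cdot n^{1+1/k}$, I would iteratively delete any vertex of degree at most $50k \cdot n^{1/k}$; the total number of edges removed is at most $50k \cdot n^{1+1/k} < e(G)/2$, so the process terminates with a nonempty subgraph $H$ of minimum degree $\delta \ge 50k \cdot n^{1/k}$. Since $H$ is also $C_{2k}$-free, it suffices to derive a contradiction from the existence of such an $H$.

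Next, fix a root $v_0 \in V(H)$ and perform a BFS, writing $L_i$ for the $i$-th layer. Because every vertex has degree at least $\delta$, a short inductive count on $i$ produces roughly $\delta^k$ walks of length $k$ starting at $v_0$; after modest bookkeeping to discard walks with repeated vertices (using that the portion of $H$ explored out to depth $k$ has at most $n$ vertices), this yields many simple paths of length $k$ from $v_0$. Since there are fewer than $n$ possible endpoints, pigeonhole delivers a vertex $u$ and a family of at least $\delta^k / n$ simple $v_0$--$u$ paths of length $k$, a quantity that is far larger than any fixed constant depending only on $k$.

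Finally, I have to upgrade this family into a cycle of length exactly $2k$. If two of these paths were internally vertex-disjoint, their union would immediately be such a cycle and we would be done; the real work is to control heavy overlap. For paths of the same length $k$, the union of two of them is a theta-like configuration whose three internal branches have lengths summing to $2k$, and a parity argument --- crucially using that both paths have length exactly $k$ --- shows that for a judiciously chosen pair among the $\delta^k/n$ available paths, some cycle through the configuration has length exactly $2k$ rather than a strictly smaller even value. Concretely, I would order the paths by their sequence of first branching points and iterate: at each stage either two paths split late enough to form a $2k$-cycle directly, or the branching structure can be inherited to the next stage with the relevant parity invariant preserved. The chief obstacle is exactly this last step, since without the parity argument one only obtains an even cycle of length at most $2k$; executing the Bondy--Simonovits parity count cleanly --- which is what distinguishes this theorem from weaker Tur\'an-type bounds on bipartite subgraphs --- is the heart of the proof.
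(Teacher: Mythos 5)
The paper does not prove this statement: it is quoted directly from Bondy--Simonovits \cite{BS}, so there is no in-paper proof to compare against. Judged on its own terms, your sketch follows the right overall template (pass to a high-minimum-degree subgraph, then derive a contradiction with $C_{2k}$-freeness), but it has two genuine gaps that are in fact the entire technical content of the theorem.

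First, the passage from walks to simple paths is not ``modest bookkeeping.'' Starting from a root $v_0$ in a graph of minimum degree $\delta$, one certainly has at least $\delta(\delta-1)^{k-1}$ non-backtracking walks of length $k$, but a walk can revisit a vertex without immediate backtracking, and the fraction of such degenerate walks can be large; knowing only that the explored ball has at most $n$ vertices gives no useful lower bound on the number of \emph{simple} paths. The actual Bondy--Simonovits argument sidesteps this entirely: rather than counting walks, it analyzes the BFS layers $V_0,V_1,\dots$ directly and proves a structural lemma asserting that, in a $C_{2k}$-free graph, the graph induced on $V_i\cup V_{i+1}$ cannot be dense; this forces $|V_{i+1}|/|V_i|$ to grow, and iterating $k$ times gives $n \ge (\delta/c_k)^k$, i.e.\ $\delta \le c_k n^{1/k}$. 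Your pigeonhole step ``many $v_0$--$u$ paths of length $k$'' is closer to the template used for Zarankiewicz-type problems and is not how one controls even cycles.

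Second, the ``theta-like configuration'' claim in your last paragraph is not correct as stated, and the parity step --- which you yourself flag as the chief obstacle --- is precisely where the theorem lives. Two $v_0$--$u$ paths of the same length $k$ that share internal vertices do \emph{not} form a theta graph (a theta graph is three \emph{internally disjoint} paths between two vertices); their union is a more intricate configuration, and the cycle you obtain at the first divergence/reconvergence has length strictly less than $2k$ unless the paths are already internally disjoint. Thus ``pick two paths and look at the theta'' does not by itself produce a $C_{2k}$. The genuine Bondy--Simonovits lemma is a statement about paths attached to a BFS tree (or about a cycle together with a crossing path) guaranteeing cycles of prescribed even length, and its proof is a delicate case analysis; an iteration ``order by first branching point and preserve a parity invariant'' would need to be written out in full to count as a proof, since this is exactly the nontrivial part. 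As a sketch it identifies the right ideas, but a referee would not accept it as establishing the stated bound $e(G)\le 100k\, n^{1+1/k}$.
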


\begin{proof}[Proof of Corollary \ref{C2k}]
    It's a direct consequence of Theorem \ref{main} and Theorem \ref{BS}.
\end{proof}

To conclude this section, we give a proof of Theorem \ref{dual}.

\begin{proof}[Proof of Theorem \ref{dual}]
Let $N, \alpha > 0$ be constants.
We say a graph $G$ is {\it $(N,\alpha)$-good}, if any subgraph $H$ of $G$ with at least $N$ edges satisfies
    \begin{align}\label{H}
        \Cr(H)\le \frac{\l e(H) \r^2}{2^{16+3/\alpha}}.
    \end{align}
We point out that if $G$ is $(N,\alpha)$-good, then any subgraph of $G$ is $(N,\alpha)$-good as well.
Let $G$ be an $(N,\alpha)$-good graph with $n$ vertices and $e$ edges. 
We aim to show that $e\leq An^{1+\alpha}$, where $A=\max \{ 88^2 2^{1+3/\alpha}, N \}$. 
    
    Suppose for a contradiction that $e>An^{1+\alpha}$. Without losing generality, we may further assume that $G$ is the minimum counterexample. 
    Consider any proper subgraph $H$ of $G$.
    Since $H$ is also $(N,\alpha)$-good, we have $e(H)\le A\l n(H)\r^{1+\alpha}$.
    This also implies that $e-1\leq An^{1+\alpha}$,
    which further implies that $e\le An^{1+\alpha}+1\leq 2An^{1+\alpha}$.
    Now we derive that any subgraph $H$ of $G$ satisfies $e(H)\leq 2A\l n(H)\r^{1+\alpha}$. 
    Using Theorem \ref{main}, if $e\ge cn$, then 
    \begin{align}\label{cc}
        \Cr(G)\ge c'\frac{e^{2+1/\alpha}}{n^{1+1/\alpha}},
    \end{align}
    where $$c'= \frac{1}{(180)^22^{1+2/\alpha} (2A)^{1/\alpha}} \ {\rm and}\ c= (88)^{2\alpha}2^{\alpha+2}(2A).$$
    Since $n^2\ge e > An^{1+\alpha}\ge An$, we have $n>A$, and hence
    $$e> An^{1+\alpha}> A^{1+\alpha}n\ge (88)^{2\alpha}2^{\alpha+3}An= cn.$$
    Combining the inequalities \eqref{H} and \eqref{cc}, we derive
    \begin{align}\notag
        \frac{e^2}{2^{16+3/\alpha}} \ge \Cr(G)\ge c'\frac{e^{2+1/\alpha}}{n^{1+1/\alpha}}> c'\frac{e^2\l An^{1+\alpha} \r^{1/\alpha}}{n^{1+1/\alpha}}=\frac{e^2}{(180)^22^{1+3/\alpha}}.
    \end{align}
    Simplifying the above inequality results in the incorrect statement $(180)^2>2^{15}$, thereby completing the proof.
\end{proof}

\end{document}